\newif\ifcolorcomments
\newcommand{\allowcomments}[4]{
\newcommand{#1}[1]{\ifdraft{\ifcolorcomments{\textcolor{#4}{##1 --#3}}\else{\textsl{ ##1 \ --#3}}\fi}\else{}\fi}
}
\newtheorem{theorem}{Theorem}[section]
\newtheorem{lemma}[theorem]{Lemma}
\newtheorem{proposition}[theorem]{Proposition}
\newtheorem{corollary}[theorem]{Corollary}
\theoremstyle{definition}
\newtheorem{remark}[theorem]{Remark}
\newcommand{\DD}{\mathcal D}
\newcommand{\N}{\mathbb N}
\newcommand{\Z}{\mathbb Z}
\newcommand{\mfk}{\mathfrak k}
\newcommand{\rad}{\operatorname{rad}}
\renewcommand{\text}{\textup}
\newcommand{\NPC}[1]{\ignorespaces}
\newif\ifdraft\drafttrue
\def\N{\mathbb N}
\def\Z{\mathbb Z}
\newcommand*{\myDots}{\ifmmode\mathellipsis\else.\kern-0.07em.\kern-0.07em.\fi}
\allowcomments{\comnikita}{NS}{Nikita}{blue}
\newcommand {\ignore}[1] {}
\begin{document}

\title{Radical bound for Zaremba's conjecture}

\author{Nikita Shulga}
\address{Nikita Shulga,  Department of Mathematical and Physical Sciences,  La Trobe University, Bendigo 3552, Australia. }
\email{n.shulga@latrobe.edu.au}
\date{}

\maketitle

\begin{abstract}
Famous Zaremba's conjecture (1971) states that for each positive integer $q\geq2$, there exists positive integer $1\leq a <q$, coprime to $q$, such that if you expand a fraction $a/q$ into a continued fraction $a/q=[a_1,\ldots,a_n]$, all of the coefficients $a_i$'s are bounded by some absolute constant $\mfk$, independent of $q$. Zaremba conjectured that this should hold for $\mfk=5$. In 1986, Niederreiter proved Zaremba's conjecture for numbers of the form $q=2^n,3^n$ with $\mfk=3$ and for $q=5^n$ with $\mfk=4$.
In this paper we prove that for each number $q\neq 2^n,3^n$, there exists $a$, coprime to $q$, such that all of the partial quotients in the continued fraction of $a/q$ are bounded by $ \rad(q)-1$, where $\rad(q)$ is the radical of an integer number, i.e. the product of all distinct prime numbers dividing $q$.

In particular, this means that Zaremba's conjecture holds for numbers $q$ of the form $q=2^n3^m, n,m\in\N \cup \{0\}$ with $\mfk= 5$, generalizing Neiderreiter's result.

Our result also improves upon the recent result by Moshchevitin, Murphy and Shkredov on numbers of the form $q=p^n$, where $p$ is an arbitrary prime and $n$ sufficiently large.
\end{abstract}

\section{Introduction and main result}

Let $q\geq2$ be an integer and let $a$ be an integer with $1 \leq a <q$, such that $\gcd(a,q)=1$. Then a rational number $a/q$ can be expanded into a finite  simple continued fraction as
$$
\frac{a}{q} = \frac{1}{a_1 + \frac{1}{a_2 + \frac{1}{\ddots \, + \frac{1}{a_r}}}} = [a_1,\ldots,a_r], \quad a_i\in \Z_+.
$$
Note that each rational $a/q\in(0,1)$ has two different representations
\begin{equation}\label{twocf}
a/q=[a_1,a_2,\ldots,a_{r-1}, a_r]  
\,\,\,\,\,\text{and}\,\,\,\,\,
a/q=[a_0;a_1,a_2,\ldots,a_{r-1}, a_r-1,1]
,
\,\,\,\, \text{where}\,\,\,\, a_r \ge 2
.
\end{equation}
Denote 
$$
K\left( \frac{a}{q} \right) = \max (a_1,\ldots,a_r).
$$
Zaremba's famous conjecture \cite{MR0343530} states that there is an absolute constant $\mfk$ with the following property: for any positive integer $q$ there exists a coprime to $q$, such that $K(a/q)\leq \mfk$. In fact, Zaremba's conjectured that for $\mfk=5$. For large prime numbers $q$ Hensley (see \cite{MR1306033}, \cite{MR1387719}) conjectured that even $\mfk=2$ should be enough. Note that this cannot be improved to $\mfk=1$, as all finite continued fractions with all partial quotients equals $1$, have Fibonacci numbers as denominators.

Korobov \cite{MR0157483} showed that for prime $q$ there exists $1 \leq a <q$ such that
$$
K(a/q) \ll \log q.
$$
In fact, this result is also true for composite $q$.

In 1986, Neiderreiter \cite{MR0851952} proved Zaremba's conjecture for the special cases of $q=2^n,\,3^n,\,5^n$ for any $n \geq 1$ with a better bound of $K(a/q)\leq3$ for powers of $2$ and $3$ and with $K(a/q)\leq4$ for powers of $5$. Yodphotong and Laohakosol \cite{MR2010939} proved this conjecture for the case $q=6^n$ for any $n\in\N$ with a bound $K(a/q)\leq5$. Finally, Komatsu \cite{MR2138391} proved it for $q=7^{c2^n}, (n\geq0, c=1,3,5,7,9,11)$ with a bound $K(a/m)\leq3$.

In recent years, Zaremba's conjecture was extensively studied, see \cite{MR3194813}, \cite{MR3284129}, \cite{MR3361774}, \cite{MR4460483},  \cite{MR4145818}, \cite{MR4202008} and many others. One of the most recent result is due to Moshchevitin, Murphy and Shkredov \cite{moshchevitin2022korobov}, they proved the following theorem.
\begin{theorem}\label{shkredov}
Let $q$ be a positive sufficiently large integer with sufficiently large prime factors.
Then there is a positive integer $a$ with $\gcd(a, q) = 1$ and
\begin{equation}\label{shk1}
    M = O(\log q/ \log \log q) 
\end{equation}
such that
\begin{equation}\label{shk2}
\frac{a}{q} = [ c_1,\ldots, c_s], \quad\quad c_j \leq M , \quad\quad \forall j \in\{1,\ldots,s\}.
\end{equation}
Also, if $q$ is a sufficiently large square–free number, then \eqref{shk1}, \eqref{shk2} take place.

\noindent Finally, if $q = p^n$, $p$ is an arbitrary prime, then \eqref{shk1}, \eqref{shk2} hold for sufficiently large $n$.
\end{theorem}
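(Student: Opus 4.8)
The plan is to translate the problem into continuants and to refine the classical dispersion (Korobov) argument. Recall that if $a/q=[a_1,\dots,a_r]$ then $q=\langle a_1,\dots,a_r\rangle$ and $a=\langle a_2,\dots,a_r\rangle$ with $\langle\cdot\rangle$ the continuant, and that $\langle a_1,\dots,a_r\rangle$ is the top-left entry of the product $T_{a_1}\cdots T_{a_r}$, where $T_c=\begin{pmatrix}c&1\\1&0\end{pmatrix}$. Since consecutive continuants are automatically coprime, it suffices to produce, for the given $q$, a word $c_1,\dots,c_s$ with all $c_j\le M$ such that the $(1,1)$-entry of $T_{c_1}\cdots T_{c_s}$ equals $q$ exactly. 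Working at the length scale $s\asymp\log q/\log M$, at which a typical such continuant has size $\asymp q$, this splits into: (i) arrange the $(1,1)$-entry to be $\equiv 0\pmod q$; and (ii) arrange its actual value to lie in $[q,2q)$, forcing it to be $q$.

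For (i) I would sharpen Korobov's count of the ``bad'' residues $a\in(0,q)$, those for which $a/q$ has some partial quotient exceeding $M$. Korobov bounds their number by a union bound over convergents: being bad at step $j$ confines $a$ to an interval of length $\asymp q/(Mq_j^2)$ around $p_jq/q_j$, and summing over admissible $(p_j,q_j)$ yields $\ll q\log q/M$, hence $K\ll\log q$. The refinement is to replace the union bound by a second-moment estimate: count $\sum_a\mathbf 1[\text{all partial quotients of }a/q\le M]$ together with its square, express the latter as a count of pairs of admissible continuant words whose products are congruent modulo $q$, and bound the off-diagonal terms using growth and sum--product estimates for the set of continuant values $\{\langle c_1,\dots,c_s\rangle:c_j\le M\}$ in $\Z/q\Z$ (in the spirit of the Bourgain--Kontorovich circle method and Shkredov-type estimates). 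Gaining an extra logarithmic factor from this diagonal/off-diagonal split --- the point where the multiplicative structure of continuants is used --- enlarges the admissible range to $M\asymp\log q/\log\log q$. Since the only input is $\gcd(a,q)=1$, this at once covers $q$ with large prime factors and $q=p^n$, where $\varphi(q)$ is comparable to $q$; for general square-free $q$, where $\varphi(q)/q$ can drop to $\asymp 1/\log\log q$, one needs a slightly finer form of the same count exploiting that the continuant values do not concentrate in any single residue class modulo $q$.

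For $q=p^n$ the extra ingredient is the modular non-concentration in (i) with modulus $p^n$: the $(1,1)$-entries of length-$s$ words in the $T_c$, $c\le M$, must spread out in $\Z/p^n\Z$. Here I would invoke expansion in $\mathrm{SL}_2(\Z/p^n\Z)$: products of the $T_c$ with $c$ below a small absolute constant already cover $\mathrm{SL}_2(\Z/p\Z)$, and the Bourgain--Gamburd--Varj\'u flattening/expansion machinery then forces their length-$s$ products to equidistribute over $\mathrm{SL}_2(\Z/p^n\Z)$ once $s$ exceeds an absolute multiple of $\log(p^n)$ over the spectral gap; the permission to let $M$ grow like $n/\log n$ is precisely what makes the word length dictated by the size constraint (ii) long enough to reach this regime while keeping the continuant of size $\asymp p^n$. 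Combining (i) and (ii) then yields a word whose continuant is exactly $p^n$.

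The main obstacle is the off-diagonal estimate behind (i): controlling the additive energy modulo $q$ of the continuant set $\{\langle c_1,\dots,c_s\rangle:c_j\le M\}$, equivalently counting the matrix coincidences $T_{c_1}\cdots T_{c_s}\equiv T_{c'_1}\cdots T_{c'_{s'}}\pmod q$ beyond the trivial ones. Korobov's $\log q$ is exactly what the crudest such count gives, so every step toward $\log q/\log\log q$ has to extract genuine arithmetic structure of continuants; by comparison the size window (ii), the coprimality bookkeeping for square-free $q$, and the lift from $\mathrm{SL}_2(\Z/p\Z)$ to $\mathrm{SL}_2(\Z/p^n\Z)$ rest on tools already available in the literature.
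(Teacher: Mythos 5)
This statement is not proved in the paper at all: it is Theorem~\ref{shkredov}, quoted verbatim from Moshchevitin, Murphy and Shkredov \cite{moshchevitin2022korobov} as background, and the paper's own contribution (Theorem~\ref{thmradical}) is logically independent of it. So there is no internal proof to compare yours against; the only fair comparison is with the cited work itself.

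Judged on its own terms, what you have written is a research programme, not a proof, and the gap is exactly where you locate it yourself. The reduction to continuants, the size window $[q,2q)$ in step (ii), and the observation that consecutive continuants are coprime are all correct and standard. But the entire content of the theorem --- the passage from Korobov's $M\ll\log q$ to $M\ll\log q/\log\log q$ --- is concentrated in the ``off-diagonal'' estimate you defer: bounding the number of coincidences $T_{c_1}\cdots T_{c_s}\equiv T_{c'_1}\cdots T_{c'_{s'}}\pmod q$ beyond the diagonal, equivalently controlling the multiplicative energy of the set of continuants $\langle c_1,\dots,c_s\rangle$ with $c_j\le M$ in $\Z/q\Z$. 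You assert that a ``diagonal/off-diagonal split'' gains a logarithmic factor, but you give no mechanism for why the off-diagonal terms are small; this is precisely the sum--product/growth input that \cite{moshchevitin2022korobov} (building on \cite{MR4145818}) supplies, and without it the second-moment method degenerates back to Korobov's union bound. Similarly, for $q=p^n$ you invoke Bourgain--Gamburd--Varj\'u expansion for the generators $T_c$, but you neither verify the Zariski-density/non-concentration hypotheses for this specific generating set nor explain how the spectral gap interacts with the archimedean size constraint (ii); reconciling the modular equidistribution with the requirement that the continuant land exactly in $[q,2q)$ is the local--global difficulty of Bourgain--Kontorovich type, and it is not addressed. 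As it stands the proposal identifies the right circle of ideas but proves none of the estimates on which the result rests.
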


Zaremba's conjecture was also studied in other settings. See, for example, \cite{Malagoli} for overview of the results on polynomial analogue of Zaremba's conjecture and \cite{GGRMHSH}, where Hurwitz continued fraction analogue of Zaremba's conjecture was formulated and some partial results are given.

Recall the definition of a radical of an integer number $n$. It is equal to the product of the distinct prime numbers dividing the given number. Formally, 
$$
\rad(n) = \prod\limits_{\substack{ p|n \\ p \text{ prime } }} p.
$$
In this paper we prove the following result in direction of Zaremba's conjecture.
\begin{theorem}\label{thmradical}
For any integer $q\geq2$, such that $q\neq2^n,3^n$,  there exists a positive integer $a$ with $1\leq a <q$ and $\gcd(a,q)=1$, such that $K(a/q)\leq \rad (q)-1$.
\end{theorem}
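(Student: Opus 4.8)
Throughout write $\langle a_1,\dots,a_r\rangle$ for the continuant of a tuple of positive integers, i.e.\ the denominator of the finite continued fraction $[a_1,\dots,a_r]$, so that $\langle a_1,\dots,a_r\rangle=a_r\langle a_1,\dots,a_{r-1}\rangle+\langle a_1,\dots,a_{r-2}\rangle$, the numerator of $[a_1,\dots,a_r]$ is $\langle a_2,\dots,a_r\rangle$, and consecutive convergents are coprime. The first step is to reduce Theorem~\ref{thmradical} to a statement about continuants: if $a/q=[a_1,\dots,a_r]$ then $q=\langle a_1,\dots,a_r\rangle$, the equality $\gcd(a,q)=1$ is automatic, and $0<[a_1,\dots,a_r]<1$ forces $1\le a<q$. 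So it suffices to show that every $q\ge 2$ with $q\ne 2^n,3^n$ can be written as $q=\langle a_1,\dots,a_r\rangle$ with all $a_i\le\rad(q)-1$. I would record at once that the hypothesis forces $d:=\rad(q)\ge 5$ — either $q$ has a prime factor $\ge 5$, or else $2\mid q$ and $3\mid q$ — so partial quotients up to $4$ are always available; moreover $\rad(m)\mid\rad(q)$ whenever $m\mid q$, which is what makes the induction below consistent.

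The engine would be a \emph{multiplication lemma}: outside a small finite set of exceptional $m$, if $m=\langle c_1,\dots,c_s\rangle$ with all $c_j\le k$ and $p$ is a prime with $p\le d$, then $pm=\langle a_1,\dots,a_r\rangle$ for some tuple with all $a_i\le\max(k,d-1)$. The proof would insert a short block $[e_1,\dots,e_t]$ of small entries at a well-chosen \emph{internal} position of a well-chosen representative $b/m$ (using both representations in \eqref{twocf}, more generally any numerator $b$ coprime to $m$, and the choice of position). Splitting the continuant at the insertion site, inserting $[e_1,\dots,e_t]$ after the $i$-th entry turns $\langle c_1,\dots,c_s\rangle$ into
\[
q_i^{*}\,(E\,q_i+E''\,q_{i-1})\;+\;q_{i+1}^{*}\,(E'\,q_i+E'''\,q_{i-1}),
\]
where $q_i=\langle c_1,\dots,c_i\rangle$, $q_{i-1}=\langle c_1,\dots,c_{i-1}\rangle$, $q_i^{*}=\langle c_{i+1},\dots,c_s\rangle$, $q_{i+1}^{*}=\langle c_{i+2},\dots,c_s\rangle$ (so that $m=q_iq_i^{*}+q_{i-1}q_{i+1}^{*}$), and $E=\langle e_1,\dots,e_t\rangle$, $E'=\langle e_1,\dots,e_{t-1}\rangle$, $E''=\langle e_2,\dots,e_t\rangle$, $E'''=\langle e_2,\dots,e_{t-1}\rangle$ are the entries of the matrix of the block. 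Setting this equal to $pm$ becomes a divisibility-and-congruence condition in the $q$'s and in the block; the plan is to first tune the representation of $m$ so that the relevant continuants become small and divisible in the right way, and then to take a block of a simple shape ($[j]$, $[1,j,1]$ and the like with $j\le p-1$) realising the factor $p$. Granting this lemma, the theorem would follow by strong induction on $q$: a prime $q=p\ge 5$ is $\langle 1,p-1\rangle$; the finitely many small composites not covered below ($q=6$, and possibly a few more) are checked by hand; prime powers $q=p^{n}$ with $p\ge 5$ are handled separately; and for every remaining $q$ one picks a prime $p\mid q$ with $m:=q/p$ not a prime power, so that $m<q$, $\rad(m)\mid d$, and the induction hypothesis for $m$ followed by the multiplication lemma (with $k\le\rad(m)-1\le d-1$) produces $q=pm$ with all partial quotients $\le d-1$.

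The prime powers $q=p^{n}$ ($p\ge 5$, $n\ge 2$) are, I expect, the main obstacle, since there $m=p^{n-1}$ has the \emph{same} radical, so the multiplication by $p$ must be carried out inserting only partial quotients $\le p-1$. This rules out appending or prepending a block: those operations would force, respectively, the numerator or the co-denominator $\langle c_1,\dots,c_{s-1}\rangle$ of the representation of $p^{n-1}$ to be smaller than $p$, which is impossible together with all partial quotients $\le p-1$ once $n\ge 2$. So the plan here is a dedicated induction on $n$, started from $p=\langle 1,p-1\rangle$ (whose co-denominator $\langle 1\rangle$ equals $1$), in which one carries along control on the residue modulo $p$ and on the size of the numerator and co-denominator of the representation of $p^{n-1}$ — just enough to guarantee that an admissible internal block with entries $\le p-1$ lands exactly on $p\cdot p^{n-1}$, and to re-establish the same control for $p^{n}$. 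The hard part will be showing that this control propagates uniformly in $p$ and $n$; that reduces to careful bookkeeping with the internal-insertion identity above together with solving the resulting Diophantine condition with small entries, and it is where the substance of the argument lies.
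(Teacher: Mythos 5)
Your plan is genuinely different from the paper's, and unfortunately it leaves the substance of the argument unproved. The paper's proof is built entirely around the Folding Lemma (Lemma \ref{folding}): starting from a seed fraction $a/q_{(N)}$ with square-free denominator $q_{(N)}\le\rad(q)$, each application of folding replaces a denominator $q_r$ by $b\,q_r^2$ while appending only the partial quotients $b-1,1$ and a mirror image of the existing ones. Writing $q_{(i-1)}=p_{(i)}q_{(i)}^2$ with $p_{(i)}\mid\rad(q)$ lets one walk from $q_{(N)}$ back up to $q_{(0)}=q$ in $O(\log\log q)$ folds, and the bound $K\le\rad(q)-1$ falls out immediately because the only new partial quotient at each step is $p_{(i)}-1\le\rad(q)-1$. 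Crucially, the paper never needs a general ``multiply a continuant by a prime'' statement: the square in $b\,q_r^2$ is what makes the recursion close.

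Your proposal instead tries to multiply by one prime at a time via a ``multiplication lemma'' about inserting a small block into a continuant. That lemma is never proved, and you yourself flag the prime-power case $q=p^n$ ($p\ge5$, $n\ge2$) as ``the main obstacle,'' ``the hard part,'' and ``where the substance of the argument lies.'' That is exactly the gap: the whole theorem reduces to a Diophantine insertion problem that you have only set up, not solved, and there is no evidence the required uniform control on residues and continuant sizes can actually be propagated. The internal-insertion identity $q_i^{*}(Eq_i+E''q_{i-1})+q_{i+1}^{*}(E'q_i+E'''q_{i-1})=pm$ gives you a single scalar equation with several unknowns and constraints; turning it into a constructive lemma with all block entries $\le p-1$ is a nontrivial problem in its own right, and prime powers of a single prime offer no slack at all. (There is also a smaller issue in the inductive scheme: you must pick $p\mid q$ with $q/p$ not of the form $2^n$ or $3^n$, which needs a short separate argument to exist outside the small cases you wave at.) I would strongly recommend looking up the Folding Lemma; once you have it, the ``square-root of $q/b$'' recursion replaces your whole multiplication lemma and the prime-power case becomes routine.
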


\begin{remark}
Note that cases $q=2^n$ and $q=3^n$ are not covered by Theorem \ref{thmradical}. As Niederreiter showed in his paper, bound $\mfk=3$ is optimal for both $q=2^n$ and $q=3^n$. In terms of a radical, this means that for $q=2^n$ there exists an odd integer $a$ with $K(a/q)\leq \rad(q)+1$ and for $q=3^n$, there exists a positive integer $a$ with $\gcd(a,3)$ with $K(a/q)\leq \rad(q)$, both of which are slightly worse than the statement of Theorem \ref{thmradical}.
\end{remark}

Theorem \ref{thmradical} provides a generalization of results by Neiderreiter and Yodphotong-Laohakosol for a set of numbers of the form $q=2^n3^m$, where $n,m\in\N$.
\begin{corollary}\label{2n3m}
For any non-negative integers $n,m$ there exists a positive integer $a$ with $1\leq a < 2^n3^m$ and $\gcd(a,2^n3^m)=1$, such that $K(a/2^n3^m)\leq 5$.
\end{corollary}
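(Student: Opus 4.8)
The plan is to obtain Corollary \ref{2n3m} as a short case analysis, invoking Theorem \ref{thmradical} in the generic situation and Niederreiter's theorem in the two excluded families. Split according to whether both exponents are positive. If $n\geq 1$ and $m\geq 1$, then $q=2^n3^m$ is divisible by both $2$ and $3$ but is neither a power of $2$ nor a power of $3$, so $q\neq 2^k,3^k$ and Theorem \ref{thmradical} applies. Since the distinct primes dividing $q$ are exactly $2$ and $3$, we have $\rad(q)=2\cdot 3=6$, and the theorem yields a positive integer $a$ with $1\le a<q$, $\gcd(a,q)=1$, and $K(a/q)\leq\rad(q)-1=5$, which is precisely the desired bound.

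It remains to treat the degenerate cases where $q$ is a pure prime power (or $1$). If $m=0$ and $n\geq 1$ then $q=2^n$, and if $n=0$ and $m\geq 1$ then $q=3^m$. In both situations Niederreiter \cite{MR0851952} produces an $a$ coprime to $q$ with $K(a/q)\leq 3\leq 5$, which is even stronger than what is claimed (this is the content recalled in the Remark following Theorem \ref{thmradical}). Finally, if $n=m=0$ then $q=2^n3^m=1$ and there is no integer $a$ with $1\le a<q$, so the assertion is vacuously true. Combining the cases gives the bound $\mfk=5$ for every $q$ of the form $2^n3^m$.

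I expect essentially no genuine obstacle here; the only point requiring care is bookkeeping. One must not apply Theorem \ref{thmradical} to the excluded values $q=2^n$ or $q=3^m$, which is exactly why the pure-power cases are peeled off and handed to Niederreiter's result; for the remaining values the hypothesis $q\neq 2^n,3^n$ of Theorem \ref{thmradical} holds automatically and the computation $\rad(2^n3^m)-1=5$ is immediate, so the corollary follows at once.
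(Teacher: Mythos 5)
Your proof is correct and matches the derivation the paper intends: the mixed case $n,m\geq 1$ is precisely what Case~2 of the proof of Theorem~\ref{thmradical} establishes (with $\rad(2^n3^m)=6$ giving the bound $5$), and the pure prime-power cases $2^n$ and $3^m$ are covered by Niederreiter's theorem exactly as recalled in the remark following Theorem~\ref{thmradical}. The only quibble is the degenerate case $n=m=0$: an existence claim over the empty range $1\le a<1$ is strictly speaking false rather than vacuously true, but that is a defect of the corollary's phrasing rather than of your argument.
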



We also note that we can improve one of the statements of Theorem \ref{shkredov}.
\begin{remark}
Using Theorem \ref{thmradical} we can improve Theorem \ref{shkredov} in the case $q=p^n$ for a prime number $p$. By Theorem \ref{shkredov} for $n$ sufficiently large one has 
$$
K(a/q) \leq O ( n \log p/ \log (n\log p) )
$$
for some $1 \leq a < q$, coprime to $p$. For large enough $n$, say for $n\asymp p^2$, our Theorem \ref{thmradical} gives a better bound of $K(a/q)\leq p-1$, as compared to 
$$
K(a/q) \leq O ( p^2 \log p/ \log (p^2\log p) ) = O(p^2)
$$
from Theorem \ref{shkredov}. When $n\gg p^2$, the bound obtained by Theorem \ref{thmradical} remains the same, but the bound obtained by Theorem \ref{shkredov} will become worse the larger the value of $n$ is.
\end{remark}
In the end of Section \ref{sec:proof} we combine our approach with Theorem \ref{shkredov} to get a generalization of it in the case of $q$ being a sufficiently large square-free integer, see Remark \ref{shkredov:generalization} below.


\section{Proof of the main result}\label{sec:proof}
The main construction in the proof rely on a famous folklore statement, knows as Folding lemma. For the reference, see \cite{MR1149740}.
\begin{lemma}[Folding Lemma]\label{folding}
If $t_r/q_r=[a_1,\ldots,a_r]$ and $b$ is a non-negative integer, then
\begin{equation}\label{foldingeq}
\frac{t_r}{q_r} + \frac{(-1)^r}{bq_r^2} = [a_1,\ldots,a_r,b-1,1,a_r-1,a_{r-1},\ldots,a_1].
\end{equation}
\end{lemma}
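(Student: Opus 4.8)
The plan is to prove the identity \eqref{foldingeq} by translating both continued fractions into products of $2\times 2$ integer matrices and comparing entries. Recall the standard correspondence: for positive integers $c_1,\dots,c_k$ set
\[
M(c_1,\dots,c_k)=\begin{pmatrix} c_1 & 1\\ 1 & 0\end{pmatrix}\begin{pmatrix} c_2 & 1\\ 1 & 0\end{pmatrix}\cdots\begin{pmatrix} c_k & 1\\ 1 & 0\end{pmatrix};
\]
then the value of $[c_1,\dots,c_k]$ in the normalization of the paper is the ratio of the $(2,1)$-entry to the $(1,1)$-entry of $M(c_1,\dots,c_k)$. Applying this to $t_r/q_r=[a_1,\dots,a_r]$ and running the convergent recurrences one checks by induction that $M(a_1,\dots,a_r)=\begin{pmatrix} q_r & q_{r-1}\\ t_r & t_{r-1}\end{pmatrix}$; since each elementary factor has determinant $-1$, this gives $q_rt_{r-1}-q_{r-1}t_r=(-1)^r$. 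Each factor is symmetric, so transposing reverses the product, whence the reversed block is $M(a_{r-1},\dots,a_1)=M(a_1,\dots,a_{r-1})^{\mathsf T}=\begin{pmatrix} q_{r-1} & t_{r-1}\\ q_{r-2} & t_{r-2}\end{pmatrix}$.

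Next I would collapse the short middle block by a direct multiplication,
\[
\begin{pmatrix} b-1 & 1\\ 1 & 0\end{pmatrix}\begin{pmatrix} 1 & 1\\ 1 & 0\end{pmatrix}\begin{pmatrix} a_r-1 & 1\\ 1 & 0\end{pmatrix}=\begin{pmatrix} ba_r-1 & b\\ a_r & 1\end{pmatrix},
\]
so that the matrix attached to the right-hand side of \eqref{foldingeq} is
\[
\begin{pmatrix} q_r & q_{r-1}\\ t_r & t_{r-1}\end{pmatrix}\begin{pmatrix} ba_r-1 & b\\ a_r & 1\end{pmatrix}\begin{pmatrix} q_{r-1} & t_{r-1}\\ q_{r-2} & t_{r-2}\end{pmatrix}.
\]
Writing $U=bq_r+q_{r-1}$ and $V=bt_r+t_{r-1}$, the first product equals $\begin{pmatrix} a_rU-q_r & U\\ a_rV-t_r & V\end{pmatrix}$ by pure algebra; multiplying on the right by the last factor and using the recurrences $a_rq_{r-1}+q_{r-2}=q_r$ and $a_rt_{r-1}+t_{r-2}=t_r$, the $(1,1)$-entry telescopes to $U\cdot q_r-q_rq_{r-1}=q_r(U-q_{r-1})=bq_r^2$ and the $(2,1)$-entry to $V\cdot q_r-t_rq_{r-1}=bt_rq_r+(q_rt_{r-1}-q_{r-1}t_r)=bt_rq_r+(-1)^r$. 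Reading the value of the continued fraction as $(2,1)/(1,1)$ then gives $\dfrac{bt_rq_r+(-1)^r}{bq_r^2}=\dfrac{t_r}{q_r}+\dfrac{(-1)^r}{bq_r^2}$, which is exactly \eqref{foldingeq}.

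The computation is short and essentially forced, so the only real care is bookkeeping: fixing one convention for which matrix entry is the numerator and which the denominator, keeping the parity in $(-1)^r$ consistent with the determinant, and tracking the off-by-one shifts among $q_r,q_{r-1},q_{r-2}$ (and likewise the $t$'s). A minor separate point is the degenerate cases: if $b=1$ or $a_r=1$ a partial quotient on the right-hand side equals $0$, and then \eqref{foldingeq} is to be read through the standard reduction $[\dots,x,0,y,\dots]=[\dots,x+y,\dots]$; the matrix identity above holds verbatim in any case, so these need only a one-line remark. One should also note that $b\ge 1$ is assumed, since otherwise the right-hand side is not a genuine continued fraction and the left-hand side is not finite.
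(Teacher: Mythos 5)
Your proof is correct. Note that the paper itself does not prove this lemma at all --- it is quoted as a folklore statement with a citation to van der Poorten--Shallit --- so your matrix computation supplies a self-contained verification that the paper omits. The argument is the standard one and every step checks out: the identification $M(a_1,\dots,a_r)=\left(\begin{smallmatrix} q_r & q_{r-1}\\ t_r & t_{r-1}\end{smallmatrix}\right)$ is consistent with the paper's normalization $[a_1,\dots,a_r]=1/(a_1+\cdots)$ (value $=(2,1)$-entry over $(1,1)$-entry), the collapsed middle block $\left(\begin{smallmatrix} ba_r-1 & b\\ a_r & 1\end{smallmatrix}\right)$ is right, and the telescoping via $a_rq_{r-1}+q_{r-2}=q_r$ and the determinant relation $q_rt_{r-1}-q_{r-1}t_r=(-1)^r$ lands exactly on $\bigl(bt_rq_r+(-1)^r\bigr)/\bigl(bq_r^2\bigr)$, which also recovers the coprimality observation in Remark \ref{trivial}\eqref{trivial1}. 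Your closing caveats are apt and match the paper's own conventions: the degenerate entries $b-1=0$ or $a_r-1=0$ are absorbed by the reduction $[\dots,x,0,y,\dots]=[\dots,x+y,\dots]$ of Remark \ref{trivial}\eqref{trivial2}, and the statement's ``non-negative $b$'' should indeed read $b\ge 1$ for the left-hand side to make sense (the paper only ever applies it with $b\ge 1$).
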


\begin{remark}
   We note that Lemma \ref{folding} and the proof of Neiderreiter's result on Zaremba's conjecture for powers of $2$ can be reformulated in terms of Minkowski question mark function $?(x)$. For more details, see  \cite{MR4121875} (Lemma 3.2 and Remark 1).
\end{remark} 

We combine some trivial observations, which will be used in the proof of Theorem \ref{thmradical} in the following remark.

\begin{remark}\label{trivial}
The following assertions hold.
\begin{enumerate}
\item\label{trivial1} $\gcd(t_r q_r b +(-1)^n,bq_r^2)=1$, meaning that the right-hand side of \eqref{foldingeq} is a reduced fraction with denominator $bq_r^2$.

\item\label{trivial2} We can allow intermediate partial quotients to be equal to $0$ using the convention $[\ldots,x,0,y,\ldots]= [\ldots,x+y,\ldots]$.

\item\label{trivial3} As each fraction $t_r/q_r$ has two different continued fraction expansions of different length parity, we can always choose $r$, such that right-hand side of \eqref{foldingeq} will look like $[a_1,\ldots,a_r,b-1,1,a_r-1,a_{r-1},\ldots,a_1]$ and not like
$[a_1,\ldots,a_{r-1},a_r-1,1,b-1,a_r,\ldots,a_1]$ with $a_r\geq2$.
\end{enumerate}

\end{remark}
Throughout the proof, when applying Lemma \ref{folding}, we will be using considerations from the statement  \eqref{trivial3} from Remark \ref{trivial}, meaning that we are assuming that $r$ is of needed parity for the resulting fraction on the right-hand side of \eqref{foldingeq} to be of the form $[a_1,\ldots,a_r,b-1,1,a_r-1,a_{r-1},\ldots,a_1]$ with $a_r\geq2$.

Now we are ready to prove Theorem \ref{thmradical}.
\begin{proof}
The canonical representation of the number $q$ is
\begin{equation}\label{canonical}
q=p_1^{n_1}p_2^{n_2}\cdots p_k^{n_k},
\end{equation}
where $p_1<p_2<\ldots<p_k$ are primes are $n_i$ are positive integers. Hence $\rad(q)= p_1\cdots p_k$.

Now consider a following iterative procedure: set $q_{(0)}:= q$ and for $i\geq1$ define $q_{(i)}$ from the equality
\begin{equation}\label{iterative}
q_{(i-1)} = p_{(i)}\cdot q_{(i)}^2,
\end{equation}
where

$$
 p_{(i)}=p_1^{v_1^{(i)}}p_2^{v_2^{(i)}}\cdots p_k^{v_k^{(i)}}  \text{ with } v_j^{(i)}\in\{0,1\} \text{ for all } i,j.$$ Note that for all $i$ we have $p_{(i)} | \rad(q)$ , so, in particular, $p_{(i)}\leq \rad(q)$.
 
Note that by the definition of the procedure \eqref{iterative}, there exists $N\in\N\cup\{0\}$, such that $q_{(N)}>1$ and $q_{(N+1)}=1$. After this step, the process terminates and we have $q_{(N+j)}=p_{(N+j)}=1,\, j\geq2$. Easy to see that $N=0$ if and only if in \eqref{canonical} one has $n_1=\ldots=n_k=1$ and $N\geq1$ otherwise. We have $2\leq q_{(N)} \leq \rad (q)$. We also note that $q_{(N)}$ is a square-free number. 

If $N=0$, then $\rad (q) = q$. Consider a fraction $(q-1)/q$. It has continued fraction expansion $(q-1)/q = [1,q-2,1]$. Hence for $a=q-1$ we get $K(a/q)\leq \rad (q)-2$.

If $N\geq1$, we distinguish several cases.

\textbf{Case 1.} $q_{(N)}\neq2,3,6$. As $q_{(N)}$ is a square-free number, this means that $q_{(N)}\geq5$, and, in particular, $\rad (q)\geq q_{(N)}\geq 5$. For a square-free number $q_{(N)}\geq5$ we have $\varphi(q_{(N)})\geq4$, where $\varphi(n)$ is an Euler's totient function. 

We will make use of the following simple observation.
\begin{proposition}\label{existcf}
For an integer number $q$ with $\varphi(q)\geq4$, there exists an integer $1\leq a \leq q-1$, coprime to $q$, such that 
$$
\frac{a}{q}= [a_1,\ldots,a_n]
$$
with the following properties:
\begin{itemize}
    \item $n\geq2$;
    \item $a_1\geq2$ and $a_n\geq2$;
    \item $K(a/q) \leq \frac{q-1}{2}$.  
\end{itemize}
\end{proposition}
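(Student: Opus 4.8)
The plan is to pick a suitable numerator $a$ and then read off all four assertions from elementary facts about continued fractions. The hypothesis $\varphi(q)\ge 4$ is used only through a counting remark: one has $q\ge 5$, and since the map $a\mapsto q-a$ is a fixed-point-free involution of the set of reduced residues modulo $q$, exactly $\varphi(q)/2\ge 2$ of the integers in $(0,q/2)$ are coprime to $q$, and one of these is $1$; hence there is another one, which lies in $[2,q/2)$. So I would fix an integer $a$ with $2\le a<q/2$ and $\gcd(a,q)=1$, and take $a/q=[a_1,\ldots,a_n]$ to be the continued fraction expansion with $a_n\ge 2$, i.e.\ the one produced by the Euclidean algorithm (one of the two expansions recorded in \eqref{twocf}).

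With this choice, three of the required properties are immediate. First, $n\ge 2$: if $n=1$ then $a/q=[a_1]=1/a_1$, forcing $a=1$, which contradicts $a\ge 2$. Second, $a_1\ge 2$: since $a<q/2$ we have $a_1=\lfloor q/a\rfloor\ge 2$. Third, $a_n\ge 2$ holds by the choice of expansion. It remains only to bound every partial quotient by $(q-1)/2$.

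For the partial quotients of index $i\ge 2$ I would invoke the denominators $q_i$ of the convergents of $[a_1,\ldots,a_n]$, which satisfy $q_0=1$, $q_1=a_1$ and $q_i=a_iq_{i-1}+q_{i-2}$, together with the standard facts that the sequence $(q_i)$ is non-decreasing with $q_n=q$ (the latter because $a/q$ is in lowest terms). For $2\le i\le n$ this gives $q=q_n\ge q_i=a_iq_{i-1}+q_{i-2}\ge 2a_i+1$, using $q_{i-1}\ge q_1=a_1\ge 2$ and $q_{i-2}\ge q_0=1$, whence $a_i\le (q-1)/2$. For the remaining quotient $a_1=\lfloor q/a\rfloor$ I would split into two sub-cases: if $a\ge 3$ then $a_1\le q/3\le (q-1)/2$ (the last inequality amounting to $q\ge 3$), while if $a=2$ then $q$ is odd, $a/q=[(q-1)/2,\,2]$, and $a_1=(q-1)/2$. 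In either case $a_1\le (q-1)/2$, so $K(a/q)\le (q-1)/2$, completing the proof.

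The argument is essentially routine; the only point that needs a little care is the interplay of the constraints on $a$ — it must be at least $2$ so that the expansion is nontrivial and $n\ge 2$, yet strictly below $q/2$ so that $a_1\ge 2$ — and the separate treatment of the borderline value $a=2$ in the bound on $a_1$. I do not expect any genuine obstacle: the heart of the matter is the one-line estimate $q_i\ge 2a_i+1$ supplied by the convergent recursion.
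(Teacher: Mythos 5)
Your proof is correct and follows essentially the same strategy as the paper: choose $a$ in the lower half of the reduced residues with $a\geq 2$, take the expansion with $a_n\geq 2$, and bound the partial quotients via the denominator recursion $q_i=a_iq_{i-1}+q_{i-2}\geq 2a_i+1$. The only cosmetic difference is that you handle $a_1$ by splitting on $a=2$ versus $a\geq 3$, whereas the paper treats $a_1$ uniformly with the other indices by invoking the continuant symmetry $\langle a_1,\ldots,a_n\rangle=\langle a_n,\ldots,a_1\rangle$ together with $a_n\geq2$.
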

\begin{proof}
First, the only reduced fraction with denominator $q$ with continued fraction of the length $n=1$ is $1/q$. So there are at least three reduced fractions with $n\geq2$. 

Next, note that if $\gcd(a,q)=1$, then also $\gcd(q-a,q)=1$. So amongst those at least three reduced fractions of the length $n\geq2$, we can always find the one with $a_1\geq2$, because if $a/q =[1,\ldots]$, then $(q-a)/q=[a_1,\ldots]$ with $a_1\geq2.$ We can also always guarantee $a_n\geq2$ due to \eqref{twocf}.

The last point is just a simple corollary of the first two. Indeed, take $1\leq a \leq q-1$ with properties guaranteed by the first two points of the proposition. Assume that there exists $1\leq i\leq n$, such that $a_i>(q-1)/2.$

Denote by $\langle a_1,\ldots,a_n\rangle$ the denominator of continued fraction $[a_1,\ldots,a_n].$ Note that 
$$
\langle a_1,\ldots,a_n\rangle = \langle a_n,\ldots,a_1\rangle.
$$

Then for $a/q$ and $j\in\{1,n\}$ one has 
$$q> \langle a_j,\ldots, (q-1)/2,\ldots \rangle \geq \langle 2, (q-1)/2 \rangle = 2 \frac{q-1}{2}+1 = q,
    $$
    which is a contradiction. Hence $K(a/q) \leq \frac{q-1}{2}$.
\end{proof}

We will now apply Proposition \ref{existcf} with $q=q_{(N)}$ to get a reduced fraction $a/q_{(N)}$, satisfying properties from Proposition \ref{existcf}, i.e $a/q=[a_1,\ldots,a_n]$ with $n\geq2,\, a_1,a_n\geq2$ and $K(a/q_{(N)}) \leq (q_{(N)} - 1)/2$.


Application of Lemma \ref{folding} with 
$$ \frac{t_r}{q_r} =  \frac{a}{q_{(N)}} \quad \text{ and } \quad b=p_{(N)}$$
leads to 
$$
\frac{t_r}{q_r} + \frac{(-1)^r}{bq_r^2} = [a_1,\ldots,a_n,p_{(N)}-1,1,a_n-1,\ldots,a_1]= \frac{a_{(N-1)}}{p_{(N)}\cdot q_{(N)}^2} = \frac{a_{(N-1)}}{ q_{(N-1)}}.
$$
We have two possible situations.

If $p_{(N)}=1$, then by statement \eqref{trivial2} from Remark \ref{trivial} we get 
$$
K\left(\frac{a_{(N-1)}}{ q_{(N-1)}}\right)  \leq (q_{(N)}-1)/2+1 =(q_{(N)}+1)/2 \leq \rad (q)-1.
$$
To get the last inequality we used $q_{(N)}\leq \rad(q)$ and $\rad(q)\geq5$.

If $p_{(N)}\geq2$, then 
$$
K\left(\frac{a_{(N-1)}}{ q_{(N-1)}}\right)  \leq \max \left(  p_{(N)}-1, K\left(\frac{a}{q_{(N)}} \right) \right) \leq \max \left( p_{(N)}-1, \frac{q_{(N)} - 1}{2} \right) \leq\rad(q)-1.
$$
Now we iteratively apply Lemma \ref{folding} for $i=1,2,\ldots,N- 1 $ with
$$ 
\frac{t_r}{q_r} =  \frac{a_{(N-i)}}{q_{(N-i)}} \quad \text{ and } \quad b=p_{(N-i)}
$$
to get
$$
\frac{t_r}{q_r} + \frac{(-1)^r}{bq_r^2} = [a_1,\ldots,a_1,p_{(N-i)}-1,1,a_1-1,\ldots,a_1]= \frac{a_{(N-i-1)}}{p_{(N-i)}\cdot q_{(N-i)}^2} = \frac{a_{(N-i-1)}}{ q_{(N-i-1)}}.
$$
If $p_{(N-i)}=1$, then by statement \eqref{trivial2} from Remark \ref{trivial} we get 
$$
K\left(\frac{a_{(N-i)}}{ q_{(N-i)}}\right)  =  \max\left( \frac{q_{(N)}-1}{2}+1, K\left(\frac{a_{(N-i+1)}}{ q_{(N-i+1)}}\right) \right) \leq \rad(q)-1.
$$
If $p_{(N)}\geq2$, then 
$$
K\left(\frac{a_{(N-1)}}{ q_{(N-1)}}\right)  \leq \max \left( p_{(N-i)}-1,  K\left(\frac{a_{(N-i+1)}}{ q_{(N-i+1)}}\right)  \right)   \leq \rad (q)-1 .
$$
Af the step $i=N-1$ after the application of Lemma \ref{folding} we get a fraction with denominator $q_{(0)}=q$ with all partial quotients bounded by $\rad(q)-1$ and the process terminates.

\textbf{Case 2.} $q_{(N)}=2,3$ or $6$. By the assumptions of Theorem \ref{thmradical} we know that $q\neq2^n, 3^n$. Assume further that $q\neq2^n3^m,n,m\geq1$. Then in all cases of $q_{(N)}=2,3$ or $6$, we have $\rad(q)\geq 2\cdot5 = 10.$ For any possible value of $q_{(N)}$ in this case, consider a fraction $1/q_{(N)} = [q_{(N)}].$ We split into two subcases.
\begin{enumerate}
\item For $q_{(N)}=2$, the application of Lemma \ref{folding} for $t_r/q_r =1/2$ and $b=p_{(N)}$ gives us 
$$
\frac{t_r}{q_r} + \frac{(-1)^r}{bq_r^2} = [q_{(N)},p_{(N)}-1,1,q_{(N)}-1]=[2,p_{(N)}-1,2]= \frac{a_{(N-1)}}{p_{(N)}\cdot q_{(N)}^2} = \frac{a_{(N-1)}}{ q_{(N-1)}}.
$$

If $p_{(N)}\geq2$, then the resulting fraction $[2,p_{(N)}-1,2]$ satisfies the first two properties from Proposition \ref{existcf} and $K(a_{(N-1)}/ q_{(N-1)}) \leq \rad(q) -1$, so we can continue the procedure as in Case 1.

If $p_{(N)}=1$, then the resulting continued fraction is $a_{(N-1)}/ q_{(N-1)}=1/4=[4]$.
Next application of Lemma \ref{folding} for $t_r/q_r =1/4$ and $b=p_{(N-1)}$ yields
$$
\frac{t_r}{q_r} + \frac{(-1)^r}{bq_r^2} = [4,p_{(N-1)}-1,1,3]= \frac{a_{(N-2)}}{p_{(N-1)}\cdot q_{(N-1)}^2} = \frac{a_{(N-2)}}{ q_{(N-2)}}.
$$
The resulting continued fraction $[4,p_{(N-1)}-1,1,3]$ satisfy the first two properties from Proposition \ref{existcf} and $K(a_{(N-2)}/ q_{(N-2)}) \leq \rad(q) -1$, so we can continue the procedure as in Case 1.

\item For $q_{(N)}=3$ or $6$, the application of Lemma \ref{folding} for $t_r/q_r =1/q_{(N)}$ and $b=p_{(N)}$ gives us 
$$
\frac{t_r}{q_r} + \frac{(-1)^r}{bq_r^2} = [q_{(N)},p_{(N)}-1,1,q_{(N)}-1]= \frac{a_{(N-1)}}{p_{(N)}\cdot q_{(N)}^2} = \frac{a_{(N-1)}}{ q_{(N-1)}}.
$$
The resulting continued fraction $[q_{(N)},p_{(N)}-1,1,q_{(N)}-1]$ satisfy the first two properties from Proposition \ref{existcf} and $K(a_{(N-1)}/ q_{(N-1)}) \leq \rad(q) -1$, so we can continue the procedure as in Case 1.

So under the assumption $q\neq2^n3^m,n,m\geq1$ we proved the statement of the theorem. Final case is $q=2^n3^m,n,m\geq1$. As $\rad(q)=6$, we want to prove the existence of $1\leq a \leq q-1$ with $\gcd(a,q)=1$, such that $K(a/q)\leq 5$.

We know that the potential values of $q_{(N)}$ are $q_{(N)}=2,3$ or $6$. 
Fractions $1/2=[2],1/3=[2,1],1/6=[5,1]$ satisfy $K(1/q_{(N)})\leq5$. By definition, $q_{(N-1)}=  p_{(N)} q_{(N)}^2$, where $p_{(N)} \in \{1,2,3,6\}$ and $q_{(N)}\in\{2,3,6\}$.
Consider the fractions
\begin{align*}
&\frac{1}{2^2}=[4], \quad &&\frac{3}{2\cdot2^2}=[2,1,2], \quad&& \frac{5}{3\cdot2^2} = [2,2,2],\quad&& \frac{7}{6\cdot2^2}=[3,2,3],\\
&\frac{2}{3^2} = [4,2],\quad &&\frac{5}{2\cdot3^2}=[3,1,1,2], \quad&& \frac{8}{3\cdot3^2}=[3,2,1,2],\quad&& \frac{17}{6\cdot3^2}=[3,5,1,2],\\
&\frac{11}{6^2}=[3,3,1,2],\quad&& \frac{17}{2\cdot6^2}=[4,4,4], \quad&&\frac{23}{3\cdot6^2}=[4,1,2,3,2],\quad&& \frac{49}{6\cdot6^2}=[4,2,2,4,2].
\end{align*}
These fraction cover all possible values of $q_{(N-1)}$ and for all fractions $a/q_{(N-1)}$, one has $K(a/q_{(N-1)})\leq5$. For every fraction except $1/2^2=[4]$, the length of continued fraction expansion is $n\geq2$ and $2\leq a_1,a_n\leq4$. We can apply Lemma \ref{folding} to each of those fractions, hence covering all possible values of $q_{(N-2)}$ (except the ones, generated by $q_{(N-1)}=2^2$, we will deal with them later). Note that in both possible forms of Folding lemma applied to the continued fraction $[a_1,\ldots,a_n]$, i.e.
$$
[a_1,\ldots,a_n,b-1,1,a_n-1,\ldots,a_1], \quad \text{when $b\geq2$}
$$
and
$$
[a_1,\ldots ,a_{n-1},a_n+1,a_n-1,a_{n-1},\ldots,a_1], \quad \text{when $b=1$},
$$
we will not get partial quotients larger than $5$, as we are either adding a partial quotient $b-1$, which in our case is at most $5$, or we increase the last partial quotient $a_n$ by $1$ and put this partial quotient in the middle of our continued fraction. As $a_n\leq4$, we have $a_n+1\leq5$. 

Now when $a/q_{(N-1)}=1/4$, consider all possible values of $q_{(N-2)}$, generated by it:
\begin{align*}
\frac{7}{2^4}=[2,3,2], \quad \frac{9}{2\cdot2^4}=[3,1,1,4], \quad \frac{13}{3\cdot2^4}=[3,1,2,4],\quad \frac{29}{6\cdot2^4}=[3,3,4,2].
\end{align*}
As before, for each continued fraction here, its length $n$ satisfies $n\geq2$ and $2\leq a_1,a_n\leq4$. Iterative application of Proposition \ref{folding} will not create partial quotients larger than $5$ by the same argument as in the last paragraph.

Hence, we have covered all possible values of $q_{(N)},q_{(N-1)}$ and $q_{(N-2)}$, preserving the property $K(a/q)\leq5$. Continuing the procedure until we get a fraction with denominator $q_{(0)}=q$ terminates the process and finishes the proof.
\end{enumerate}

\end{proof}

\begin{remark}
    Note that the constant $\mfk=5$ for numbers $q$ of the form $q=2^n3^m,n,m\geq1$ is optimal, see, for example, $q=2\cdot3$ or $q=2\cdot 3^3$.
\end{remark}
Finally, we provide a remark on how one can apply our framework combined with results such as Theorem \ref{shkredov} to get some  generalizations of them.
\begin{remark}\label{shkredov:generalization}
By Theorem \ref{shkredov}, when $d$ is a sufficiently large square-free integer, there exists $a$, coprime to $d$, such that 
$$
K(a/d) \leq O (  \log d/ \log \log d ).
$$
By the iterative procedure \eqref{iterative} from the proof of Theorem \ref{thmradical}, we reduce any number $q$ to a square-free number $q_{(N)}$ in finite number of steps. Afterwards, we start building continued fractions with given properties starting from $q_{(N)}$ and going back to the number $q:=q_{(0)}$.

    Theorem \ref{shkredov} guarantees that for the square-free number $d=q_{(N)}=p_1\cdots p_k$ one can find an integer $a$, coprime to $q_{(N)}$ with
$$
K(a/q_{(N)}) \leq O (  \log q_{(N)}/ \log \log q_{(N)} ).
$$
Starting from this fraction as a seed fraction of our iterative procedure, we can apply Lemma \ref{folding} with $b\in\DD$, where $\DD$ is the set of small divisors of $q_{(N)}$ defined as
$$
\DD = \left\{ m\in\N\, : \, m | q_{(N)} \, \text{ and } \, m \leq O (  \log q_{(N)}/ \log \log q_{(N)} )\right\}.
$$
This set is always non-empty, because $1\in\DD$.
Then using the same iterative procedure as in the proof of Theorem \ref{thmradical}, one will generate all numbers $q$ of the form 
$$
p_{i_1}^{n_1}\cdots p_{i_j}^{n_j} d^{2^n} \quad \text{ for all } \quad p_{i_1}\cdots p_{i_j}\in \DD,\, n_1,\ldots,n_j\in\N\cup\{0\}, n\in\N.
$$

In particular, as $1\in\DD$, iterative application of Lemma \ref{folding} will generate all numbers of the form $d^{2^n},n\in\N$ with the same bound
$$
K(a/d^{2^n}) \leq O (  \log d/ \log \log d )
$$
for sufficiently large square-free integer $d$ and some number $a$ coprime to $d$.
\end{remark}


\bibliographystyle{abbrv}

\end{document}